\def\altdb{\vadjust{\vbox to 0pt{\vss\hbox{\kern \hsize
\quad{\dbend}}\kern\baselineskip\kern-10pt}}}
\newcommand\field[1]{\mathbb{#1}}
\newcommand\CC{\field{C}}
\newcommand\Gg{G}
\renewcommand\ker{\operatorname{ker}}
\newcommand\supp{\operatorname{supp}}
\theoremstyle{plain}
\newtheorem{theorem}{Theorem}[section]
\newtheorem*{theorem*}{Theorem}
\newtheorem*{prop*}{Proposition}
\newtheorem{cor}[theorem]{Corollary}
\newtheorem{lemma}[theorem]{Lemma}
\newtheorem{prop}[theorem]{Proposition}
\theoremstyle{remark}
\newtheorem{rmk}[theorem]{Remark}
\theoremstyle{definition}
\newcommand{\go}{\Gg^{(0)}}
\newcommand{\Bco}[2]{B^{\operatorname{co}}_{#1}(#2)}
\newcommand{\BcoG}{\Bco{*}{\Gg}}
\newcommand{\BncoG}{\Bco{n}{\Gg}}
\numberwithin{equation}{section}
\title{Uniqueness Theorems for Steinberg Algebras}
\author{Lisa Orloff Clark}
\author{Cain Edie-Michell}
\address{Lisa Orloff Clark and Cain Edie-Michell\\ Department of Mathematics and Statistics\\ University of Otago\\ PO Box 56\\ Dunedin 9054\\ NEW ZEALAND}
\email{lclark@maths.otago.ac.nz, edica501@student.otago.ac.nz}
\keywords{Groupoid; groupoid algebra; Steinberg algebra}
\subjclass{16S99 (Primary); 16S10, 22A22 (Secondary)}
\begin{document}

\begin{abstract}
We prove Cuntz-Krieger and graded uniqueness theorems for Steinberg algebras.  We also show that a Steinberg algebra is
basically simple if and only if its associated groupoid is both effective and minimal.  Finally
we use results of Steinberg to characterise the center
of Steinberg algebras associated to minimal groupoids.
\end{abstract}

\maketitle

\section{Introduction}\label{sec:int}
Let $G$ be a Hausdorff, ample groupoid and $R$ a commutative ring with identity. The Steinberg algebra $A_R(G)$
is the $R$-algebra of locally constant functions  from $G$ to $R$ with compact support. These algebras were introduced in
 \cite{Steinberg2010} as a model for discrete inverse semigroup algebras.
Steinberg algebras also include the Kumjian-Pask algebras of \cite{ACaHR} and \cite{CFaH} 
 which themselves include the Leavitt path algebras of \cite{Tomforde}.

In this paper we generalise a number of results about complex Steinberg algebras from \cite{BCFS} and \cite{CFST} to 
Steinberg algebras
over arbitrary commutative rings with identity.  We also investigate the center of $A_R(G)$.

In section~\ref{sec:uniqueness},  we prove Cuntz-Krieger and graded uniqueness theorems for $A_R(G)$.  
These theorems generalise the analogous graph algebra uniqueness theorems and give conditions
under which an $R$-algebra homomorphism $\pi:A_R(G) \to A$ is injective.  
Complex Steinberg algebra versions of these
theorems are given in \cite{CFST} but the proofs rely on the standard inner product in $\mathbb{C}$.
We noticed
that the analytic structure of $\mathbb{C}$ is not required and we simplify the arguments considerably.   

In section~\ref{sec:simp}, we generalize \cite[Theorem~4.1]{BCFS}, which says a complex Steinberg 
algebra is simple if and only if $G$ is \emph{effective} and \emph{minimal}.
This result is not true for more general Steinberg $R$-algebras  because ideals in $R$ create ideals in $A_R(G)$.
However, as was done first for Leavitt path algebras in \cite{Tomforde} and then for Kumjian-Pask algebras in \cite{ACaHR}, 
we define \emph{basic simplicity} for Steinberg algebras, and show that 
$A_R(G)$ is basically simple if and only if $G$ is effective and minimal.

In the final section, we investigate the center of $A_R(G)$. Steinberg gives a complete characterisation of 
the center of $A_R(G)$ in \cite[Proposition~4.13]{Steinberg2010}.   We apply his result to algebras associated to
 minimal groupoids in Theorem~\ref{thm:center}.  This theorem  is a Steinberg algebra generalisation of the 
Kumjian-Pask algebra result of \cite[Theorem~4.7]{center}.  

\smallskip

\textbf{Acknowledgement.}  Thank you to Aidan Sims for a number of helpful conversations.

\smallskip

\section{Preliminaries}\label{sec:prelim}

\subsection{Groupoids}A \emph{groupoid} consists of sets $G$ and  $G^{(2)}\subseteq G \times G$ 
such that there is a map $(\alpha,\beta) \mapsto \alpha \beta$ from $G^{(2)} \to G$ (sometimes 
called composition) and an
involution $\alpha \mapsto \alpha^{-1}$ on $G$, such that the following conditions hold:
\begin{enumerate}
\item if $(\alpha,\beta)$ and $(\beta,\gamma)$ are in $G^{(2)}$, then so are $(\alpha \beta,\gamma)$ and $(\alpha,\beta \gamma)$, and the equation,
$(\alpha \beta)\gamma = \alpha (\beta \gamma)$ holds;
\item for all $\alpha \in G$, $(\alpha^{-1},\alpha) \in G^{(2)}$ and if $(\alpha,\beta)\in G^{(2)}$, then $\alpha^{-1}(\alpha \beta) = \beta$ and
$(\alpha \beta)\beta^{-1} = \alpha$.
\end{enumerate}
We define the functions $r$ and $s$ from $G$ to itself by \[r(\alpha) = \alpha\alpha^{-1} \text{ and } s(\alpha) = \alpha^{-1}\alpha.\] 
These are called the range and source maps respectively. We call the common image of $r$ and $s$ the unit space of $G$ and denote it $G^{(0)}$.
Note that $(\alpha, \gamma) \in G^{(2)}$ if and only if $s(\alpha) = r(\gamma)$.
We define the product of two subsets $A$ and $B$ in $G$ by
\begin{equation*}
AB := \{\alpha\beta : \alpha \in A, \beta \in B, s(\alpha) = r(\beta)\}.
\end{equation*}
We will let $\operatorname{Iso}(G)$ denote the \emph{isotropy subgroupoid} of $G$. That is \[
\operatorname{Iso}(G) := \{\gamma \in G \mid s(\gamma)=r(\gamma)\}.\]

For any $u \in \go$, we define the \emph{orbit} of $u$ 
\[
 [u] := s(r^{-1}(u)) = r(s^{-1}(u)) \subseteq \go.
\]
We say a subset $U$ of $\go$ is \emph{invariant} if $r(s^{-1}(U)) = U$.

We call $G$ a \emph{topological groupoid} if $G$ is endowed with a topology such that composition and inversion are continuous. 
An \emph{ open bisection} of $G$ is an open subset $B$ of $G$ such that both $r|_B$ and $s|_B$ are homeomorphisms.  
We call $G$ \emph{\'etale} if  $s$
 is a local homeomorphisms; in this case, $r$ is also a local homeomorphism.  
Notice that $G$ is \'etale if and only if $G$ has a basis of open bisections.   
We call $G$ \emph{ample} if $G$ has a basis of compact open bisections.  We also have that a locally compact, Hausdorff \'etale
groupoid is ample if and only if $\go$ is totally disconnected (see \cite[Proposition~4.1]{Exel:PAMS2010}).
In general, we
will only be considering Hausdorff, ample groupoids in this paper.

We say $G$ is \emph{effective} if the interior of Iso($G$) $\setminus G^{(0)}$ is empty.  
That is, $G$ is effective if every open subset $U \subseteq G \setminus \go$  has an element $\gamma \in U$ such that
   $s(\gamma) \neq r(\gamma)$.  Since we do not assume that
$G$ is second countable, effective is strictly weaker than  `topologically principal' (see 
\cite[Example~6.4]{BCFS}).  We call $G$ \emph{minimal} if $G^{(0)}$ has no nontrivial open invariant subsets.  
Equivalently, $G$ is minimal if each orbit $[u]$ is dense in $\go$.

\subsection{Steinberg Algebras}
  Suppose $G$ is a Hausdorff, ample groupoid and  $R$ is a commutative ring with identity.  Let $A_R(G)$ be the set of all functions
from $G$ to $R$ that are locally constant and have compact support.  It is easy to check that if $f \in A_R(G)$, then 
$\supp f$ is clopen.

\begin{rmk}
Note that if $f : G \to R$ is locally constant, then it is continuous with respect to any topology on $R$. To see this, observe
that for $r \in R$ and $\alpha \in f^{-1}(r)$ there is a neighbourhood $U$ of $\alpha$ such that $f|_U \equiv r$, therefore
$f^{-1}(r)$ is open. Thus for any set $S \subseteq R$, $f^{-1}(S) = \bigcup\limits_{r\in S} f^{-1}(r)$ is open.
\end{rmk}
We define addition in $A_R(G)$ pointwise, and for $f,g \in A_R(G)$ convolution is defined by
\begin{equation*}
(f*g)(\alpha) = \sum\limits_{r(\beta) = r(\alpha)} f(\beta)g(\beta^{-1}\alpha).
\end{equation*}
With this structure, $A_R(G)$ is an algebra called the \emph{Steinberg algebra} associated to $G$.
The algebra $A_R(G)$ can also be realised as the span of characteristic functions of the form $1_B$ where
$B$ is a compact open bisection (see \cite[Lemma~3.3]{CFST}).   Convolution of characteristic functions is 
nicely behaved:
\[
 1_B * 1_D = 1_{BD}
\]
for compact open bisections $B$ and $D$ (see \cite[Proposition~4.5]{Steinberg2010}). Note 
 that $A_R(G)$ is unital if and only if $\go$ is compact (see \cite[Proposition 4.11]{Steinberg2010}).
Steinberg algebras were introduced in \cite{Steinberg2010} in the context of discrete inverse semigroup
algebras.\footnote{Steinberg uses the notation $`RG'$ rather than $A_R(G)$.}  
In \cite[Example~3.2]{morita}, it is shown that Steinberg algebras include Leavitt path
algebras.  More generally, it is shown in \cite[Proposition~4.3]{CFST} that Steinberg algebras include Kumjian-Pask algebras.

Let $\Gamma$ be a discrete group (with identity $e$) and $c : G \to \Gamma$ a continuous cocycle;
that is, $c$ preserves composition in $G$.  For each $n \in \Gamma$ we write
 $G_n := c^{-1}(n)$ and write $A_R(G)_n$ for the subset of $A_R(G)$ consisting of functions whose support is contained in 
$G_n$.  With this
structure, $A_R(G)$ is graded by $\Gamma$ (see \cite[Proposition~3.6]{CFST}).  
Note that any Hausdorff, ample groupoid admits the trivial cocycle from
$G$ to the trivial group $\{e\}$ which yields the trivial grading on $A_R(G)$.

We say a subset $S$ of $G$ is $n$-graded if $S \subseteq G_n$ and we write $\BncoG$ for the collection of all 
$n$-graded compact open bisections of $G$. We write \[\BcoG:=
\bigcup\limits_{n\in\Gamma} \BncoG.\]  The following lemma  is an R-algebra generalisation of \cite[Lemma~3.5]{CFST} and the 
proof is the same.

\begin{lemma}
 \label{lem:3.5}
Let $G$ be a Hausdorff, ample groupoid, $\Gamma$ a discrete group, and $c : G \to \Gamma$ a continuous cocycle.
Every $f \in A_R(G)$ can be expressed as \[f = \sum_{B \in F} a_B 1_{B}\] where 
$F$ is a finite set of mutually disjoint elements of $\BcoG$ and each $a_B \in R$.
\end{lemma}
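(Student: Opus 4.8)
The plan is to reduce the statement to two successive refinements of a linear combination of characteristic functions: first making all the bisections graded, then making them pairwise disjoint. The starting point is the fact, recalled above from \cite[Lemma~3.3]{CFST}, that $A_R(G)$ is spanned by the $1_B$ with $B$ a compact open bisection, so that $f = \sum_{i=1}^k a_i 1_{B_i}$ for some compact open bisections $B_i$ and some $a_i \in R$.

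For the first refinement I would use that $\Gamma$ is discrete and $c$ is continuous, so each $G_n = c^{-1}(n)$ is clopen. Hence $B_i = \bigsqcup_{n} (B_i \cap G_n)$ expresses $B_i$ as a disjoint union of open subsets; by compactness only finitely many are nonempty, and each $B_i \cap G_n$ is a compact open bisection contained in $G_n$, i.e.\ an element of $\BncoG$. Substituting $1_{B_i} = \sum_n 1_{B_i \cap G_n}$ lets me assume from now on that every $B_i$ lies in $\BcoG$.

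The second refinement is the standard disjointification trick. I would first note that, since $G$ is Hausdorff, the compact open bisections are closed under finite intersections and relative complements: a subset of a bisection is a bisection; compact subsets of a Hausdorff space are closed, so $B_i \cap B_j$ is a closed (hence compact) open subset of $B_i$, and $B_i \setminus B_j = B_i \cap (G \setminus B_j)$ is likewise an open, compact subset of $B_i$. Then, for each nonempty $S \subseteq \{1,\dots,k\}$, I form the atom $D_S := \big(\bigcap_{i \in S} B_i\big) \setminus \big(\bigcup_{i \notin S} B_i\big)$. By the previous remark each $D_S$ is a compact open bisection; the nonempty $D_S$ are pairwise disjoint; each nonempty $D_S$ is contained in some $B_i$ and therefore inherits a grading, placing it in $\BcoG$; and $B_i = \bigsqcup_{S \ni i} D_S$.

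Assembling, $1_{B_i} = \sum_{S \ni i} 1_{D_S}$, so $f = \sum_{S} \big(\sum_{i \in S} a_i\big) 1_{D_S}$; taking $F$ to be the set of nonempty $D_S$ with $\sum_{i\in S} a_i \neq 0$ and $a_{D_S} := \sum_{i \in S} a_i$ completes the argument. I do not expect any real obstacle — this is essentially a bookkeeping argument, which is why the proof of \cite[Lemma~3.5]{CFST} carries over verbatim — but the one place where the hypotheses are genuinely needed rather than pure set theory is the closure of the compact open bisections under intersections and relative complements, which is exactly where Hausdorffness of $G$ enters.
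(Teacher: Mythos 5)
Your proposal is correct, and it is essentially the argument the paper relies on: the paper simply defers to the proof of \cite[Lemma~3.5]{CFST}, which proceeds the same way --- write $f$ as a finite $R$-linear combination of $1_B$'s, cut each $B$ along the clopen sets $c^{-1}(n)$ to make them homogeneous, and then disjointify via intersections and relative complements, using Hausdorffness to keep everything compact open. No gaps.
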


Next we describe how $A_R(G)$ is a universal algebra.  Let $A$ be an $R$-algebra. A \emph{representation of $\BcoG$} in $A$ is a family \[\{t_B : B
\in \BcoG\} \subseteq A\] satisfying
\begin{enumerate}\renewcommand{\theenumi}{R\arabic{enumi}}
\item\label{it:zero} $t_\emptyset = 0$;
\item\label{it:multiplicative} $t_Bt_D = t_{BD}$ for all $B,D \in \BcoG$; and
\item\label{it:additive} $t_B + t_D = t_{B \cup D}$ whenever $B$ and $D$ are disjoint
    elements of $\BncoG$ for some $n$ such that $B \cup D$ is a bisection.
\end{enumerate}

\begin{prop}\label{thm:uni}
Let $G$ be a Hausdorff, ample groupoid, $\Gamma$ a discrete group, and $c : G \to \Gamma$ a continuous cocycle.
Then $\{1_B : B \in \BcoG\} \subseteq A_R(G)$ is a representation of $\BcoG$
which spans $A_R(G)$. Moreover, $A_R(G)$ is universal for representations of $\BcoG$ in the sense that
for every representation $\{t_B : B \in \BcoG\}$ of $\BcoG$ in an $R$-algebra $A$, there is a unique $R$-algebra
homomorphism $\pi : A_R(G) \to A$ such that $\pi(1_B) = t_B$ for all $B \in \BcoG$.
\end{prop}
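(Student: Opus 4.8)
The plan is to verify in turn that $\{1_B\}$ is a representation, that it spans, and then to construct the universal map. The first claim is essentially bookkeeping: property \eqref{it:zero} holds since $1_\emptyset = 0$; property \eqref{it:multiplicative} is exactly the identity $1_B * 1_D = 1_{BD}$ for compact open bisections quoted from \cite[Proposition~4.5]{Steinberg2010} (one should check $BD$ is again a compact open bisection, which is standard for ample groupoids); and property \eqref{it:additive} is the pointwise identity $1_B + 1_D = 1_{B\cup D}$, valid whenever $B\cap D = \emptyset$. The spanning claim is precisely Lemma~\ref{lem:3.5} (or \cite[Lemma~3.3]{CFST}), so nothing new is needed there.

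For the universal property, let $\{t_B : B \in \BcoG\}$ be a representation in an $R$-algebra $A$. Uniqueness of $\pi$ is immediate from the spanning property: any $R$-algebra homomorphism agreeing with $1_B \mapsto t_B$ is determined on all of $A_R(G)$. For existence, I would define $\pi$ on a function $f$ by choosing, via Lemma~\ref{lem:3.5}, an expression $f = \sum_{B\in F} a_B 1_B$ with $F$ a finite set of mutually disjoint compact open bisections, and setting $\pi(f) = \sum_{B\in F} a_B t_B$. The first task is to show this is well defined, i.e.\ independent of the chosen decomposition. The standard device is to pass to a common refinement: given two such expressions for $f$, intersect the bisections appearing in both (intersections of compact open bisections are compact open bisections) to get a finite collection $\{C_j\}$ of mutually disjoint compact open bisections refining both $F$'s, write each $1_B = \sum_{C_j \subseteq B} 1_{C_j}$ using \eqref{it:additive}, and check that both original sums for $\pi(f)$ reduce to the same sum $\sum_j c_j t_{C_j}$; here one uses that, since the $C_j$ subordinate to a fixed $B$ are disjoint and $B$-graded (they all lie in the same $G_n$ as $B$ if $B$ is $n$-graded), repeated application of \eqref{it:additive} is legitimate, and one matches coefficients by evaluating $f$ pointwise.

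Once $\pi$ is well defined, $R$-linearity is clear from the definition (common refinements again handle sums of two functions), and multiplicativity follows by reducing $f*g$ to the case of single bisections: if $f = \sum_B a_B 1_B$ and $g = \sum_D b_D 1_D$ then $f * g = \sum_{B,D} a_B b_D 1_{BD}$ by \eqref{it:multiplicative} for the $1$'s, and applying $\pi$ gives $\sum_{B,D} a_B b_D t_{BD} = \sum_{B,D} a_B b_D t_B t_D = \pi(f)\pi(g)$ using \eqref{it:multiplicative} for the $t$'s — with the mild caveat that $\sum_{B,D} a_B b_D 1_{BD}$ may need re-expressing over a disjoint family before $\pi$ is applied, which is exactly where well-definedness is used again. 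Finally $\pi(1_B) = t_B$ is the defining formula with $F = \{B\}$. I expect the main obstacle to be the well-definedness argument: being careful that the common-refinement bisections lie in the correct graded components so that axiom \eqref{it:additive} actually applies, and that the pointwise evaluation genuinely forces the coefficients to agree. Everything else is routine.
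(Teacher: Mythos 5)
Your proposal is correct and follows essentially the same route as the paper, which gives no independent argument but simply invokes the proof of \cite[Theorem 3.10]{CFST} with $A(G)$ replaced by $A_R(G)$: that proof likewise rests on the spanning/disjointification lemma (Lemma~\ref{lem:3.5}) together with a common-refinement argument reducing everything to the relations \eqref{it:zero}--\eqref{it:additive}. Your caveats (keeping refinement pieces inside a single graded component so \eqref{it:additive} applies, using $t_\emptyset=0$ for empty intersections, and discarding or separately handling pieces not covered by the other decomposition) are exactly the details that proof handles, so nothing essential is missing.
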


Once again, the proof of  is exactly the same as the proof of \cite[Theorem 3.10]{CFST} with
$A(G)$ replaced with $A_R(G)$.

\subsection{Basic ideals}  We call an ideal $I$ in $A_R(G)$ a \emph{basic ideal}  
if it satisfies the following:
if $r1_K \in I$ for  $r \in R$ and compact open
$K \subseteq G^{(0)}$, then $1_K \in I$. 
We say $A_R(G)$ is \emph{basically simple} if it contains no non-trivial
basic ideals. This definition is a generalization of the notion of a basic simplicity for graph algebras
from in \cite{Tomforde} and \cite{ACaHR}.

\section{The uniqueness theorems}\label{sec:uniqueness} In this section, we prove Cuntz-Krieger and
graded uniqueness theorems for Steinberg algebras.
Our proofs are similar in structure to the analogous proofs in section 5 of \cite{CFST} where $R = \CC$.
However in \cite{CFST} the authors use the inner-product of $\mathbb{C}$ to build a function
in the Steinberg algebra that is nonzero on $\go$.
In the following key lemma, we produce the desired function in our more general setting.

\begin{lemma}\label{thm:grad}
Let $G$ be a Hausdorff, ample groupoid, $\Gamma$ a discrete group, and $c : G \to \Gamma$  a continuous cocycle. Fix
 $k \in \Gamma$.  Then for each $g_k \in A_R(G)_k \setminus \{0\}$, there exists a compact open bisection $B$ such that
 $f := 1_{B^{-1}} * g_k \in A_R(G)_e$ and $\supp(f) \cap G^{(0)} \neq \emptyset$.
\end{lemma}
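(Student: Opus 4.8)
The plan is to produce the bisection $B$ by locating a single point in $G$ where $g_k$ is nonzero, thickening it to a compact open bisection on which $g_k$ is constant, and then using convolution by $1_{B^{-1}}$ to ``translate'' that point onto the unit space. Since $g_k \neq 0$, pick $\gamma \in G_k$ with $g_k(\gamma) = a \neq 0$ in $R$. Because $g_k$ is locally constant and $G$ is ample, choose a compact open bisection $B$ containing $\gamma$ with $B \subseteq G_k$ (the cocycle $c$ is continuous and $\Gamma$ discrete, so $c^{-1}(k)$ is open, hence we may shrink $B$ into $G_k$) and with $g_k$ constant equal to $a$ on $B$. Note $c(B^{-1}) = k^{-1}$, so $1_{B^{-1}} \in A_R(G)_{k^{-1}}$ and therefore $f := 1_{B^{-1}} * g_k \in A_R(G)_{k^{-1}k} = A_R(G)_e$ by the grading; this takes care of the grading claim.

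\textbf{The core computation} is to evaluate $f$ at the unit $u := s(\gamma) = s|_B^{-1}$ applied appropriately — precisely, $u = \gamma^{-1}\gamma \in G^{(0)}$. Using the convolution formula,
\[
 f(u) = (1_{B^{-1}} * g_k)(u) = \sum_{r(\beta) = r(u)} 1_{B^{-1}}(\beta)\, g_k(\beta^{-1}u).
\]
Since $B^{-1}$ is a bisection and $r(u) = u = s(\gamma)$, there is exactly one $\beta \in B^{-1}$ with $r(\beta) = u$, namely $\beta = \gamma^{-1}$ (because $r(\gamma^{-1}) = \gamma^{-1}\gamma = s(\gamma) = u$). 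For that $\beta$ we get $\beta^{-1}u = \gamma u = \gamma\gamma^{-1}\gamma = \gamma$, so the sum collapses to the single term $1_{B^{-1}}(\gamma^{-1})\, g_k(\gamma) = 1 \cdot a = a \neq 0$. Hence $u \in \supp(f) \cap G^{(0)}$, which is what we want.

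\textbf{The main obstacle} — really the only place one must be careful — is justifying that $B$ can be chosen simultaneously $k$-graded, a compact open bisection, containing $\gamma$, and with $g_k \equiv a$ on it. This is a routine intersection argument: $G$ has a basis of compact open bisections, $c^{-1}(k)$ is open by continuity of $c$ into the discrete group $\Gamma$, and $g_k^{-1}(a)$ is open since $g_k$ is locally constant; intersecting a basic compact open bisection neighbourhood of $\gamma$ with these two open sets and shrinking to a basic compact open bisection inside the intersection does the job. One should also remark that the specific value of $g_k$ away from $B$ is irrelevant — the bisection property of $B^{-1}$ forces the convolution sum at $u$ to have a unique nonzero summand regardless — so no control over $g_k$ outside $B$ is needed. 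With these points in place the lemma follows immediately from the displayed computation.
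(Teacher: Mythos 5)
Your proof is correct, and it takes a genuinely different (and somewhat leaner) route than the paper's. The paper first invokes Lemma~\ref{lem:3.5} to write $g_k=\sum_{D\in F}a_D1_D$ with $F$ a finite family of \emph{mutually disjoint} $k$-graded compact open bisections, takes $B\in F$ with $a_B\neq 0$, computes $1_{B^{-1}}*g_k=\sum_D a_D 1_{B^{-1}D}$ via the rule $1_{B^{-1}}*1_D=1_{B^{-1}D}$, and uses the disjointness of $F$ to show that for $\alpha\in B$ the unit $\alpha^{-1}\alpha$ lies in no $B^{-1}D$ with $D\neq B$, so $f(\alpha^{-1}\alpha)=a_B\neq0$. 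You instead pick a single point $\gamma$ with $g_k(\gamma)\neq0$, shrink a basic compact open bisection around it inside the open set $G_k=c^{-1}(k)$, and evaluate the convolution directly at $u=\gamma^{-1}\gamma$: since $r|_{B^{-1}}$ is injective, the only possible nonzero summand is $\beta=\gamma^{-1}$, giving $f(u)=g_k(\gamma)\neq0$. The mechanism that kills the other terms is thus injectivity of $r$ on a bisection rather than disjointness of a decomposition, and you avoid Lemma~\ref{lem:3.5} altogether; the paper's version stays entirely within the calculus of characteristic functions used elsewhere in the paper, which is why they phrase it that way. Two small remarks: your requirement that $g_k$ be constant on $B$ is superfluous (your own computation yields $f(\gamma^{-1}\gamma)=g_k(\gamma)$ with no control of $g_k$ on or off $B$), and for the grading claim your appeal to $A_R(G)_{k^{-1}}A_R(G)_k\subseteq A_R(G)_e$ is fine since the $\Gamma$-grading is cited in the preliminaries, or one can argue directly that $\supp(f)\subseteq B^{-1}\,\supp(g_k)\subseteq G_{k^{-1}}G_k\subseteq G_e$, which is essentially the paper's inclusion $B^{-1}D\subseteq G_e$.
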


\begin{proof}
Let $g_k \in A_R(G)_k$.   Using Lemma~\ref{lem:3.5}, we can write \[g_{k}=\sum_{D \in F} a_D 1_{D}\] where
$F$ is a finite set of mutually disjoint elements of $\Bco{k}{\Gg}$ and each $a_D \in R$.
As $g_k \neq 0$ there must exist $B \in F$ such that $a_B \neq 0$. Now define $f := 1_{B^{-1}} * g_k$. We claim that
$ f \in A_R(G)_e$. To see this notice that

\begin{equation*}
f = 1_{B^{-1}} * g_k = 1_{B^{-1}} * \sum\limits_{D \in F} a_D1_D =  
\sum\limits_{D \in F}a_D1_{B^{-1}} * 1_D = \sum\limits_{D \in F}a_D1_{B^{-1}D}.
\end{equation*}
Because each $D \in F$ is a subset of $G_k$, we have
\begin{equation*}
B^{-1}D \subseteq G_{k^{-1}k} = G_e,
\end{equation*}
thus $f\in A_R(G)_e$ as claimed.

To show $f$ is non-zero on $G^{(0)}$ let $\alpha \in B$. We claim that $\alpha^{-1}\alpha \not\in B^{-1}D$ for any $D \in F \setminus B$.
 To prove this, suppose by way of contradiction that for some $D\in F \setminus B$, there exists $x \in B$ and $y \in D$ such that
 $x^{-1}y = \alpha^{-1}\alpha$, but then $y = x\alpha^{-1}\alpha = x$, which is a contradiction as $B$ and $D$ are mutually disjoint.
Thus $1_{B^{-1}D}(\alpha^{-1}\alpha) = 0$ for all $D\in F \setminus B$, and therefore
\begin{equation*}
f(\alpha^{-1}\alpha) = \sum\limits_{D \in F} a_D1_{B^{-1}D}(\alpha^{-1}\alpha) = a_B \neq 0.
\end{equation*}
\end{proof}

We are now ready to prove the Cuntz-Krieger uniqueness theorem.  
Notice that we generalise the complex version of \cite[Theorem~5.1]{CFST} in two ways.
First, we are working over an arbitrary commutative ring with identity.  
Second, we have replaced the hypothesis that $G$ be topologically
principal with the weaker requirement that $G$ be effective.  The observation that effectiveness suffices comes
from \cite[Lemma 4.2]{BCFS}.

\begin{theorem}\label{thm:CKUT}
Let $G$ be an effective Hausdorff, ample groupoid, and $R$ a commutative ring with identity.  
Let $\pi : A_R(G) \to A$ be an $R$-algebra homomorphism. Suppose that $\ker(\pi) \neq \{0\}$. 
Then there is a compact open subset $K \subseteq G^{(0)}$ and $r \in R \setminus\{0\}$ such that $\pi(r1_K) = 0$.
\end{theorem}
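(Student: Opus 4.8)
The plan is to start with a nonzero element $a = \pi(f)$... wait, we want to produce something in the kernel. Let me restart.

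Since $\ker(\pi) \neq \{0\}$, pick a nonzero $g \in \ker(\pi)$. Using Lemma~\ref{lem:3.5} (with the trivial cocycle, so $\Gamma = \{e\}$), write $g = \sum_{D \in F} a_D 1_D$ with $F$ a finite set of mutually disjoint compact open bisections and each $a_D \neq 0$. The strategy is to whittle $g$ down, by multiplying on the left and right by characteristic functions of compact open bisections (which lies inside $A_R(G)$ and is preserved by $\pi$, so the product stays in $\ker(\pi)$), until what remains is supported on $G^{(0)}$ and is a nonzero scalar multiple of $1_K$ for some compact open $K \subseteq G^{(0)}$.

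First I would apply (an obvious analogue of) Lemma~\ref{thm:grad}: choose $B \in F$ with $a_B \neq 0$ and replace $g$ by $f_1 := 1_{B^{-1}} * g$, which by the lemma's computation lies in $A_R(G)$ with $f_1(\alpha^{-1}\alpha) = a_B \neq 0$ for any $\alpha \in B$; set $u = \alpha^{-1}\alpha \in G^{(0)}$. Since $f_1 \in \ker(\pi)$ is still locally constant with compact support and $f_1(u) = a_B$, there is a compact open neighbourhood $K_0 \subseteq G^{(0)}$ of $u$ on which $f_1 \equiv a_B$; replacing $f_1$ by $1_{K_0} * f_1 * 1_{K_0}$ (still in the kernel) we may assume $f_1$ is supported in $K_0 G K_0 \subseteq G|_{K_0}$ and $f_1|_{K_0} \equiv a_B$. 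Now the key point is that $\supp(f_1) \setminus G^{(0)}$ is an open subset of $G \setminus G^{(0)}$ with compact closure contained in $G|_{K_0}$; I would cover it, together with $K_0$, by finitely many compact open bisections and, using effectiveness together with the argument of \cite[Lemma~4.2]{BCFS}, find a nonempty compact open $K \subseteq K_0$ such that $K G K \cap \supp(f_1) \subseteq G^{(0)}$ — i.e. $f_1$ vanishes off $G^{(0)}$ once cut down to $K$. Concretely: enumerate the finitely many compact open bisections $B_1,\dots,B_m$ meeting $\supp(f_1) \setminus G^{(0)}$ with $B_i \subseteq G \setminus G^{(0)}$; effectiveness lets us shrink $K_0$ step by step (at each stage picking $\gamma \in B_i$ with $s(\gamma) \neq r(\gamma)$ in the relevant open piece and removing a compact open neighbourhood of $s(\gamma)$ or $r(\gamma)$) so that the final $K$ satisfies $r(KB_iK \cap \operatorname{Iso}(G)) = \emptyset$ for every $i$, hence $KB_iK \subseteq G \setminus G^{(0)}$ contributes nothing on $K$.

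Finally, set $r := a_B \in R \setminus \{0\}$ and observe that $1_K * f_1 * 1_K = r 1_K$: on $K \subseteq G^{(0)}$ the value is $a_B$, and everywhere else in $KGK$ it is $0$ by the previous paragraph, while off $KGK$ the product vanishes automatically. Since $f_1 \in \ker(\pi)$ and $\ker(\pi)$ is an ideal, $\pi(r1_K) = \pi(1_K * f_1 * 1_K) = \pi(1_K)\pi(f_1)\pi(1_K) = 0$, as required. The main obstacle is the middle step — the careful bookkeeping showing that effectiveness allows us to cut down $K_0$ to a nonempty $K$ on which all the off-diagonal pieces of $\supp(f_1)$ disappear; this is where \cite[Lemma~4.2]{BCFS} does the real work, and the delicate point is that finitely many shrinkings suffice because $\supp(f_1)$ is covered by finitely many bisections, so we never exhaust $K_0$.
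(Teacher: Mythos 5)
Your overall route is the same as the paper's: use Lemma~\ref{thm:grad} (with the trivial cocycle) to replace an arbitrary nonzero element of $\ker(\pi)$ by an element $f_1\in\ker(\pi)$ that takes a fixed nonzero value $a_B$ on part of $\go$, then use effectiveness to find a nonempty compact open $K\subseteq\go$ on which $f_1\equiv a_B$ and for which the off-diagonal part of $\supp(f_1)$ is killed by cutting down, so that $1_K*f_1*1_K=a_B1_K\in\ker(\pi)$. The only structural difference is cosmetic: the paper splits $f=f_0+(f-f_0)$ with $f_0=f|_{\go}$, writes $f_0=\sum_{D\in F}a_D1_D$ and fixes $D_0\in F$, then applies \cite[Lemma~3.1 (1 $\to$ 4)]{BCFS} to $H=\supp(f-f_0)$ to get $K\subseteq D_0$ with $KHK=\emptyset$; you instead take a compact open neighbourhood $K_0$ of one unit where $f_1$ is constant. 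Either version works.

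The one place your write-up would fail if taken literally is the ``concretely'' sketch of the cut-down step. First, the condition you aim for, $KB_iK\cap\operatorname{Iso}(G)=\emptyset$, is too weak: a non-isotropy $\gamma\in B_i\cap\supp(f_1)$ with $r(\gamma),s(\gamma)\in K$ would make $1_K*f_1*1_K$ nonzero off $\go$, so it would not equal $r1_K$; what you need is exactly your earlier (correct) goal $KGK\cap\supp(f_1)\subseteq\go$, i.e.\ $KB_iK=\emptyset$ for every $i$. Second, ``removing a compact open neighbourhood of $s(\gamma)$ or $r(\gamma)$'' does not dispose of a bisection in one step: other elements of $B_i$ may still have both endpoints in what remains, and there can be infinitely many of them, so finitely many removals need not terminate. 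The correct one-step move per bisection (this is the content of \cite[Lemma~3.1 (1 $\to$ 4)]{BCFS}, which the paper cites as a black box) is: if $T:=B_i\cap s^{-1}(V)\cap r^{-1}(V)\neq\emptyset$, effectiveness gives $\gamma\in T$ with $s(\gamma)\neq r(\gamma)$; since $B_i$ is a bisection, $\varphi:=r\circ(s|_{B_i})^{-1}$ is a homeomorphism, and Hausdorffness plus continuity give a compact open neighbourhood $W\subseteq V$ of $s(\gamma)$ with $\varphi(W\cap s(B_i))\cap W=\emptyset$, whence $WB_iW=\emptyset$; one pass through the finitely many $B_i$ then produces $K$. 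Since you explicitly defer ``the real work'' to the BCFS lemma, your proof is correct once that citation (with the full condition $KB_iK=\emptyset$) replaces the sketched mechanism.
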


\begin{proof}
Fix $g \in \ker(\pi) \setminus \{0\}$. We can use Lemma~\ref{thm:grad} with the  trivial cocycle  to
 get a compact open bisection $B$ such that $f := 1_{B^{-1}} * g$ is nonzero on $G^{(0)}$. Notice  that $f \in \ker(\pi)$ 
because $\pi$ is a homomorphism.
Define the nonzero function $f_0$ such that \[
   f_0(\alpha) = \begin{cases}
       f(\alpha), &  \text{ if } \alpha \in G^{(0)};\\
       0, &  \text{otherwise}.\end{cases}
\]
 Because $G^{(0)}$ is both open and closed, $f_0 \in A_R(G)$.
Using Lemma~\ref{lem:3.5}, write\begin{equation*}
f_0 = \sum\limits_{D \in F} a_D1_D
\end{equation*}
where $F$ is a collection of mutually disjoint, nonempty compact open subsets of $G^{(0)}$.
Let $H$ be the support of $f - f_0$. Notice that $H \subseteq G \setminus G^{(0)}$.  Fix $D_0 \in F$.
As $G$ is effective we can apply \cite[Lemma 3.1 (1 $\to$ 4)]{BCFS} to get nonempty open
$K \subseteq D_0$ such that $KHK = \emptyset$. Since $G$ has a basis of compact open sets,
we can also assume $K$ is compact.

Now compute for $\gamma \in G$ 
\begin{equation*}
(1_K*(f-f_0)*1_K)(\gamma) = 1_K(r(\gamma))(f-f_0)(\gamma)1_K(s(\gamma))  = 0.
\end{equation*}
Thus the linearity of convolution gives \[1_K*f*1_K = 1_K*f_0*1_K\]
 which is equal to $a_{D_0}1_K$.
Thus $r :=a_{D_0}$ satisfies
\begin{equation*}
\pi(r1_K) = \pi(1_K)\pi(f)\pi(1_K) = 0.
\end{equation*}

\end{proof}

From this theorem we get the following corollary which will be useful in the next section.

\begin{cor}\label{thm:CKUT4}
Let $G$ be an effective, Hausdorff, ample groupoid and $R$ a commutative ring with identity.
Suppose that $I$ is a nontrivial basic ideal of $A_R(G)$. Then there is a
compact open subset $K \subseteq G^{(0)}$ such that $1_K \in I$.
\end{cor}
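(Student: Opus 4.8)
The plan is to realise $I$ as the kernel of an $R$-algebra homomorphism and then quote Theorem~\ref{thm:CKUT}. Concretely, let $\pi : A_R(G) \to A_R(G)/I$ be the canonical quotient map. Since $I$ is a two-sided ideal of the $R$-algebra $A_R(G)$, the quotient $A_R(G)/I$ is again an $R$-algebra and $\pi$ is an $R$-algebra homomorphism with $\ker(\pi) = I$. As $I$ is nontrivial, in particular $I \neq \{0\}$, so $\ker(\pi) \neq \{0\}$ and the hypotheses of Theorem~\ref{thm:CKUT} are met (note that $G$ is assumed effective, which is precisely the standing hypothesis of that theorem).

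Applying Theorem~\ref{thm:CKUT} then produces a compact open subset $K \subseteq \go$ and an element $r \in R \setminus \{0\}$ with $\pi(r1_K) = 0$, that is, $r1_K \in I$. Now invoke the hypothesis that $I$ is a basic ideal: since $r1_K \in I$ with $K \subseteq \go$ compact open and $r \neq 0$, the defining property of basicness forces $1_K \in I$, which is exactly the assertion.

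I do not expect any genuine obstacle here. The substantive work has already been done in Theorem~\ref{thm:CKUT}, and the definition of basic ideal was tailored precisely so that the passage from $r1_K \in I$ to $1_K \in I$ is immediate. The only routine points are that the quotient map is an $R$-algebra homomorphism with kernel exactly $I$ and that the theorem indeed supplies a \emph{nonzero} scalar $r$, both of which are clear.
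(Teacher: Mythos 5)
Your proposal is correct and follows exactly the paper's own argument: pass to the quotient map $\pi : A_R(G) \to A_R(G)/I$, apply Theorem~\ref{thm:CKUT} to obtain $r1_K \in \ker(\pi) = I$ with $r \neq 0$, and then use the basic ideal property to conclude $1_K \in I$. There is nothing to add.
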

\begin{proof}
Let $\pi : A_R(G) \to A_R(G) / I$ be the quotient map. We can apply Theorem~\ref{thm:CKUT} to get
compact open $K \subseteq G^{(0)}$ and $r \in R \setminus\{0\}$ such that $\pi(r1_K) = 0$.
As $\ker(\pi) = I$ we have that $r1_K \in I$.  Since $I$ is a basic ideal we get $1_K \in I$.
\end{proof}

The graded uniqueness theorem follows from a bootstrapping argument.

\begin{theorem}\label{thm:CKUT2}
Let $G$ be a Hausdorff, ample groupoid, $R$ a commutative ring with identity, $\Gamma$ a discrete group, 
and $c : G \to \Gamma$ a continuous cocycle. Suppose that $G_e$ is effective. Let $\pi : A_R(G) \to A$ be a 
graded $R$-algebra homomorphism. Suppose that $\ker(\pi) \neq \{0\}$. Then there is a compact open subset 
$K \subseteq G^{(0)}$ and $r \in R \setminus \{0\}$ such that $\pi(r1_K) = 0$.
\end{theorem}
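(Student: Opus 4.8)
The plan is to deduce this from the Cuntz--Krieger uniqueness theorem (Theorem~\ref{thm:CKUT}) by a two-step reduction, which is the ``bootstrapping'' alluded to above: first replace the given element of $\ker(\pi)$ by a \emph{homogeneous} one, then restrict everything to the clopen subgroupoid $G_e$, which is effective by hypothesis, and apply the already-proved non-graded theorem there.

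First I would record that, because $\pi$ is graded, $\ker(\pi)$ is a graded ideal of $A_R(G)$. Indeed, for $g \in \ker(\pi)$ write $g = \sum_{n \in \Gamma} g_n$ with $g_n \in A_R(G)_n$ (a finite sum); then $0 = \pi(g) = \sum_n \pi(g_n)$ is a relation in the $\Gamma$-graded algebra $A$, in which the sum is direct, so $\pi(g_n) = 0$ for every $n$. Since $\ker(\pi) \ne \{0\}$, this produces some $k \in \Gamma$ and $g_k \in A_R(G)_k \setminus \{0\}$ with $\pi(g_k) = 0$. Applying Lemma~\ref{thm:grad} to $g_k$ then yields a compact open bisection $B$ such that $f := 1_{B^{-1}} * g_k \in A_R(G)_e$ and $\supp(f) \cap \go \ne \emptyset$. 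As $\pi$ is a homomorphism and $g_k \in \ker(\pi)$, we have $f \in \ker(\pi)$, and $f \ne 0$ since it is nonzero on $\go$.

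Next I would descend to $G_e = c^{-1}(e)$. Since $c$ is continuous and $\Gamma$ is discrete, $G_e$ is a clopen subgroupoid of $G$ containing $\go$; hence $G_e$ is again Hausdorff and ample with unit space $G_e^{(0)} = \go$, and (because $G_e$ is open in $G$) every compact open bisection of $G$ contained in $G_e$ is a compact open bisection of $G_e$, these forming a basis for its subspace topology. Consequently $A_R(G)_e$ is identified with $A_R(G_e)$, sitting inside $A_R(G)$ as the subalgebra of functions vanishing off $G_e$; in particular the convolution product and the symbols $1_K$, $r 1_K$ for compact open $K \subseteq \go$ are unambiguous in either algebra. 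The restriction $\pi|_{A_R(G_e)} : A_R(G_e) \to A$ is then an $R$-algebra homomorphism with $f \in \ker(\pi|_{A_R(G_e)}) \setminus \{0\}$, so its kernel is nontrivial. Since $G_e$ is effective by hypothesis, Theorem~\ref{thm:CKUT} applied to $G_e$ and $\pi|_{A_R(G_e)}$ gives a compact open $K \subseteq G_e^{(0)} = \go$ and $r \in R \setminus \{0\}$ with $\pi(r 1_K) = 0$, which is exactly the assertion.

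This is essentially routine bootstrapping, and I do not expect a genuine obstacle. The only point requiring a little care is the identification $A_R(G)_e \cong A_R(G_e)$ together with the verification that $G_e$ inherits the hypotheses of Theorem~\ref{thm:CKUT}; both are immediate once one notes that $G_e$ is clopen in $G$ and has the same unit space, so that compactness, openness, the ample/Hausdorff structure, and characteristic functions of subsets of $\go$ all transfer without change.
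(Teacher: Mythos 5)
Your proposal is correct and follows the paper's own argument essentially verbatim: extract a nonzero homogeneous element of $\ker(\pi)$ from the gradedness of $\pi$, apply Lemma~\ref{thm:grad} to move it into $A_R(G)_e$ while keeping it nonzero, identify $A_R(G)_e$ with $A_R(G_e)$, and apply Theorem~\ref{thm:CKUT} to the restricted homomorphism on the effective groupoid $G_e$. The extra care you take in checking that $G_e$ is a clopen, Hausdorff, ample subgroupoid with unit space $\go$ is exactly the point the paper passes over quickly, and it is handled correctly.
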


\begin{proof}
First we claim that there exists a nonzero $f \in A_R(G)_e$ such that $\pi(f) = 0$. To see this, observe
that since $\ker (\pi) \neq \{0\}$, there exists $g \in \ker(\pi)\setminus\{0\}$ such that $\pi(g) =
0$. Since $g$ is an element of the graded algebra $A_R(G)$, $g$ can be expressed as a finite sum of
graded components $g = \sum_{h \in F}  g_h$ where $F \subseteq \Gamma$ and each $g_h \in A_R(G)_h$. Now
\[\pi(g) = \sum_{h \in F} \pi(g_h) = 0,\] and each $\pi(g_h) \in A_h$ because $\pi$ is a graded
homomorphism. Because the graded subspaces of $A$ are linearly independent, it follows that each
$\pi(g_h) = 0$. Since $g \not= 0$, there exists $k \in F$ such that $g_{k} \not= 0$.  Now we apply 
Lemma~\ref{thm:grad} to get a compact open bisection $B$ such that 
$f := 1_{B^{-1}} * g_k \in A_R(G)_e \setminus \{0\}$. We have $\pi(f) = 0$ as $\pi(g_k) = 0$.

By hypothesis  $G_e$ is effective. By definition, $A_R(G)_e$ is equal
to the space of locally constant, continuous, compactly supported functions on $\Gg_e$, so we may
apply Theorem~\ref{thm:CKUT} to the restricted homomorphism to get $K \subseteq G^{(0)}$ and $r \in R \setminus \{0\}$ such that
 $\pi$ annihilates $r1_K$.
\end{proof}
\section{Basic Simplicity}\label{sec:simp}

In this section, we prove the following theorem.

\begin{theorem}\label{thm:CKUT8}
Let $G$ be an Hausdorff, ample groupoid, and $R$ a commutative ring with identity. Then $A_R(G)$ is 
basically simple if and only if $G$ is effective and minimal.
\end{theorem}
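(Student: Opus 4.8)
The plan is to prove the two implications separately, treating the forward direction in contrapositive form: I will show that failure of minimality \emph{or} failure of effectiveness each produces a nontrivial basic ideal. (Here one tacitly assumes $R\ne\{0\}$, since otherwise $A_R(G)=\{0\}$ is vacuously basically simple while $G$ need not be effective or minimal.) For the reverse implication, suppose $G$ is effective and minimal and let $I$ be a nontrivial basic ideal; I will show $I=A_R(G)$. By Corollary~\ref{thm:CKUT4} there is a nonempty compact open $K\subseteq\go$ with $1_K\in I$. Since $G$ is \'etale, the saturation $r(s^{-1}(K))$ is an open invariant subset of $\go$ containing $K$, so minimality forces it to equal $\go$. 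Now fix a nonempty compact open $U\subseteq\go$. For each $u\in U$ I would pick, using ampleness and the previous sentence, a compact open bisection through a point of $s^{-1}(K)\cap r^{-1}(u)$ with source inside $K$; their ranges cover $U$, and passing to a finite subcover and disjointifying inside the totally disconnected space $\go$ yields compact open bisections $C_1,\dots,C_n$ with $s(C_j)\subseteq K$ and with the $r(C_j)$ disjoint with union $U$. Since $s(C_j)\subseteq K\subseteq\go$ one computes $C_jKC_j^{-1}=r(C_j)$, whence $1_{r(C_j)}=1_{C_j}*1_K*1_{C_j^{-1}}\in I$ and $1_U=\sum_j 1_{r(C_j)}\in I$. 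As $1_B=1_{r(B)}*1_B\in I$ for every compact open bisection $B$, and these span $A_R(G)$ by Proposition~\ref{thm:uni}, we conclude $I=A_R(G)$, contradicting nontriviality.

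\textbf{Not minimal.} If $G$ is not minimal, pick a nonempty proper open invariant $U\subsetneq\go$ and set $G|_U:=\{\gamma\in G:s(\gamma)\in U\}$, an open subgroupoid by invariance of $U$. Let $I_U:=\{f\in A_R(G):\supp f\subseteq G|_U\}$. A direct check with the convolution formula shows $I_U$ is a two-sided ideal; it is basic since, for $r\ne 0$, the condition $r1_K\in I_U$ says precisely $K=\supp(r1_K)\subseteq U$; and it is nontrivial, containing $1_K$ for any nonempty compact open $K\subseteq U$ but omitting $1_{K'}$ whenever $K'$ is compact open and meets $\go\setminus U$.

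\textbf{Not effective.} If $G$ is not effective, then $\operatorname{Iso}(G)^\circ\setminus\go$ is nonempty and open, so by ampleness it contains a nonempty compact open bisection $B$; put $V:=r(B)=s(B)\subseteq\go$, so that $B\cap V=\emptyset$ and $a:=1_B-1_V\ne 0$. I claim $I:=\langle a\rangle$ is a nontrivial, proper, basic ideal. The device is the following: $\operatorname{Iso}(G)^\circ$ is a normal open subgroupoid, and for $f\in A_R(G)$ and a left coset $\mathcal C$ of $\operatorname{Iso}(G)^\circ$ the quantity $M_{\mathcal C}(f):=\sum_{p\in\mathcal C}f(p)$ is a finite sum ($\mathcal C$ is discrete, $\supp f$ compact) that is $R$-linear in $f$. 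Because $1_V$ is a local unit for $a$, one has $I=A_R(G)\,a\,A_R(G)=\operatorname{span}_R\{1_{CBD}-1_{CVD}:C,D\text{ compact open bisections}\}$. The key point is that each generator $1_{CBD}-1_{CVD}$ is annihilated by every $M_{\mathcal C}$: indexing $CBD=\{\gamma\beta_{s(\gamma)}\delta\}$ and $CVD=\{\gamma\delta\}$ by the same set of composable pairs $(\gamma,\delta)\in C\times D$ with $s(\gamma)\in V$ (where $\beta_{s(\gamma)}$ is the unique element of $B$ over $s(\gamma)$), both indexings are injective and $\gamma\beta_{s(\gamma)}\delta$ and $\gamma\delta$ lie in the same coset — their quotient is $\gamma\beta_{s(\gamma)}\gamma^{-1}\in\operatorname{Iso}(G)^\circ$, using $\beta_{s(\gamma)}\in\operatorname{Iso}(G)^\circ$ — so $CBD$ and $CVD$ meet each coset in equinumerous finite sets. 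Hence $M_{\mathcal C}$ vanishes on $I$. Conversely, for nonempty compact open $K\subseteq\go$ and $u\in K$ we get $M_{\operatorname{Iso}(G)^\circ_u}(1_K)=1_K(u)=1\ne 0$; this shows $1_K\notin I$, so $I$ is proper (e.g.\ $1_V\notin I$), and shows that $r1_K\in I$ with $r\ne 0$ is impossible unless $K=\emptyset$, so $I$ is basic. Since $a\ne 0$, $I$ is nontrivial.

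The step I expect to be the main obstacle is the last one: extracting a basic ideal from non-effectiveness, in particular proving $\langle 1_B-1_V\rangle$ is proper. Morally this ideal is the kernel of a ``pushforward'' $A_R(G)\to A_R(G/\operatorname{Iso}(G)^\circ)$ onto the effectivization, but since that quotient groupoid may fail to be Hausdorff I would prefer to argue directly through the coset-mass functionals $M_{\mathcal C}$ as above; getting the bookkeeping of the sets $CBD$ and $CVD$ exactly right — they collapse to the same coset pattern once the isotropy bisection $B$ is ``untwisted'' — is where the care is needed.
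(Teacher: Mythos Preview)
Your argument is correct. The reverse implication is essentially the paper's: both use Corollary~\ref{thm:CKUT4} to get $1_K\in I$, then minimality plus a finite cover by compact open bisections with source in $K$ to show $1_U\in I$ for every compact open $U\subseteq\go$, hence $I=A_R(G)$.

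The forward (contrapositive) direction differs in method. The paper builds a single tool --- the representation $\pi_W:A_R(G)\to\operatorname{End}(\mathbb{F}_R(W))$ of Proposition~\ref{thm:homo} --- and proves once (Lemma~\ref{thm:CKUT6}) that its kernel is always a basic ideal. Non-effectiveness is then handled by observing $1_B-1_{s(B)}\in\ker\pi_{\go}$ (since $f_B=f_{s(B)}$ when $B\subseteq\operatorname{Iso}(G)$), and non-minimality by $1_V\in\ker\pi_W$ for $V\subseteq U$, $W=\go\setminus U$. You instead construct the ideals directly: for non-minimality the ideal $I_U$ of functions supported in $G|_U$, and for non-effectiveness the principal ideal $\langle 1_B-1_V\rangle$, controlled by your coset-mass functionals $M_{\mathcal C}$ over $\operatorname{Iso}(G)^\circ$. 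The paper's approach is shorter once the representation machinery is set up (one line per case), while yours is self-contained and avoids introducing $\mathbb{F}_R(W)$ at the price of the combinatorial check that $CBD$ and $CVD$ meet each coset equinumerously. Your functionals $M_{\mathcal C}$ are essentially the components of the (possibly ill-behaved) map to $A_R(G/\operatorname{Iso}(G)^\circ)$ you mention; it is worth noting that only the functionals for unit cosets $\operatorname{Iso}(G)^\circ_u$ are actually needed to witness basicness and properness.
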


Our proof of Theorem~\ref{thm:CKUT8} is similar in structure to the proof of
complex version of \cite[Theorem 4.1]{BCFS}. 
However, before we proceed with the proof, we need to generalise the $\CC$-algebra representation built in \cite[Proposition 4.3]{BCFS}. 
Given an invariant subset $W$ of $G^{(0)}$ and a commutative ring $R$ with identity, we write $\mathbb{F}_R(W)$ for the free $R$-module with basis $W$. 
\begin{prop}\label{thm:homo}
Let $G$ be a Hausdorff, ample groupoid, $R$ a commutative ring with identity, and $W$ an invariant subset of 
$G^{(0)}$.
\begin{enumerate}
\item \label{it1:rep} For every compact open bisection $B \subseteq G$ there is a function $f_B : G^{(0)} \to \mathbb{F}_R(W)$ that 
has support contained in $s(B)\cap W$ and satisfies $f_B(s(\gamma)) = r(\gamma)$ for all $\gamma \in B \cap s^{-1}(W)$.
\item \label{it2:rep} There is a unique homomorphism $\pi_W : A_R(G) \to End(\mathbb{F}_R(W))$ such that \[\pi_W(1_B)(u) = f_B(u)\]
 for every compact open bisection $B$ and all $u\in W$.
\end{enumerate}
\end{prop}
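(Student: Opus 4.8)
The plan is to construct the operators $\pi_W(1_B)$ directly on basis elements and then invoke the universal property of $A_R(G)$ (Proposition~\ref{thm:uni}) to obtain the homomorphism. First I would establish part~\eqref{it1:rep}. Given a compact open bisection $B$, I define $f_B : G^{(0)} \to \mathbb{F}_R(W)$ by setting $f_B(u) = r(\gamma)$ if there exists $\gamma \in B$ with $s(\gamma) = u \in W$, and $f_B(u) = 0$ otherwise. This is well defined because $s|_B$ is injective, so $\gamma$ is unique when it exists; and $r(\gamma) \in W$ because $W$ is invariant, so $r(\gamma)$ is a legitimate basis element of $\mathbb{F}_R(W)$. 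By construction the support of $f_B$ is contained in $s(B) \cap W$, and $f_B(s(\gamma)) = r(\gamma)$ for $\gamma \in B \cap s^{-1}(W)$.

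For part~\eqref{it2:rep}, I would define for each compact open bisection $B$ an $R$-linear endomorphism $t_B \in \operatorname{End}(\mathbb{F}_R(W))$ by specifying it on the basis $W$ via $t_B(u) = f_B(u)$ and extending linearly. The key step is then to verify that $\{t_B : B \in \Bco{*}{\Gg}\}$ is a representation of $\Bco{*}{\Gg}$ in the sense of \eqref{it:zero}--\eqref{it:additive} (with respect to the trivial cocycle, so $\Bco{*}{\Gg}$ is just all compact open bisections). Condition~\eqref{it:zero} is immediate since $f_\emptyset \equiv 0$. For \eqref{it:additive}, if $B$ and $D$ are disjoint with $B \cup D$ a bisection, then for $u \in W$ the point $u$ lies in $s(B)$ or $s(D)$ but not both, and one checks $f_{B \cup D}(u) = f_B(u) + f_D(u)$ by tracking which of the (unique) preimages exists. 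The interesting case is the multiplicativity condition \eqref{it:multiplicative}, i.e.\ $t_B t_D = t_{BD}$: evaluating on $u \in W$, $t_B t_D(u) = t_B(f_D(u))$; if there is $\gamma \in D$ with $s(\gamma) = u$ then $f_D(u) = r(\gamma) \in W$ (by invariance), and $t_B(r(\gamma)) = r(\delta)$ if there is $\delta \in B$ with $s(\delta) = r(\gamma)$, in which case $\delta\gamma \in BD$ with $s(\delta\gamma) = u$ and $r(\delta\gamma) = r(\delta)$; this matches $f_{BD}(u)$, and the degenerate cases (no such $\gamma$, or no such $\delta$) give $0$ on both sides. Finally, the universal property of $A_R(G)$ yields a unique $R$-algebra homomorphism $\pi_W : A_R(G) \to \operatorname{End}(\mathbb{F}_R(W))$ with $\pi_W(1_B) = t_B$, and uniqueness of $\pi_W$ among homomorphisms satisfying the stated formula follows because the $1_B$ span $A_R(G)$.

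The main obstacle, such as it is, is bookkeeping in the multiplicativity check: one must be careful that the composable pairs $(\delta,\gamma) \in B \times D$ with $s(\delta) = r(\gamma)$ are in bijection with the elements of $BD$ whose source lies in $W$, using both the bisection property (to get uniqueness/injectivity) and the invariance of $W$ (to ensure intermediate ranges stay in $W$ so that $t_B$ can be applied). Everything else is routine linear-algebra verification, and no analytic input is needed, which is exactly the point of working over a general commutative ring $R$.
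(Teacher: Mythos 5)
Your proposal is correct and follows essentially the same route as the paper, which simply cites the argument of \cite[Proposition~4.3]{BCFS} with $\mathbb{F}(W)$ replaced by $\mathbb{F}_R(W)$ and the universal property replaced by Proposition~\ref{thm:uni}: define $f_B$ via the bisection property and invariance of $W$, check that the operators $t_B$ form a representation of $\BcoG$, and invoke universality plus the spanning of the $1_B$ for existence and uniqueness. Your verifications of (R1)--(R3), including the composability bookkeeping in the multiplicativity step, are exactly the details that reference supplies.
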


\begin{rmk} 
We point out that there is a typo in \cite[Theorem~4.3]{BCFS}.  The intersection with $W$ and with $s^{-1}(W)$ were
mistakenly omitted in item (\ref{it1:rep}).
\end{rmk}

\begin{proof}
The proof of this proposition is the same as the proof of \cite[Proposition 4.3]{BCFS} with 
$\mathbb{F}(W)$ replaced with $\mathbb{F}_R(W)$, and the universal property of $A(G)$ (\cite[Theorem 3.10]{CFST}) 
replaced with Proposition~\ref{thm:uni}.
\end{proof}

\begin{rmk}
As stated in \cite{BCFS}, the map $\pi_{G^{(0)}}$ is a generalization of the infinite-path representation of a
 Kumjian-Pask algebra as defined in \cite{ACaHR}.
\end{rmk}

\begin{lemma}\label{thm:CKUT6}
Let $G$ be a Hausdorff, ample groupoid, $R$ a commutative ring with identity, and $W$ an invariant subset of 
$G^{(0)}$. Let $\pi_W$ be the homomorphism of Lemma~\ref{thm:homo}. 
Then $\ker(\pi_W)$ is a basic ideal.
\end{lemma}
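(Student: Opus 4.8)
The plan is to show directly that $\ker(\pi_W)$ satisfies the defining property of a basic ideal: if $r1_K \in \ker(\pi_W)$ for some $r \in R$ and compact open $K \subseteq \go$, then in fact $1_K \in \ker(\pi_W)$. So I would fix such an $r$ and $K$ and compute $\pi_W(r1_K) = \pi_W(1_K)$ scaled, evaluated on a basis element $u \in W$. Using Proposition~\ref{thm:homo}\eqref{it2:rep}, $\pi_W(1_K)(u) = f_K(u)$, and since $K$ is contained in $\go$, every $\gamma \in K$ is a unit, so $s(\gamma) = r(\gamma) = \gamma$. Thus for $u \in K \cap W$ we get $f_K(u) = u$, while $f_K(u) = 0$ for $u \in W \setminus K$ (support condition in \eqref{it1:rep}). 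In other words, $\pi_W(1_K)$ is the projection of the free module $\mathbb{F}_R(W)$ onto the submodule spanned by $K \cap W$.

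The key step is then the observation that this projection is determined coordinatewise: $\pi_W(1_K)(u) = u$ if $u \in K \cap W$ and $\pi_W(1_K)(u) = 0$ otherwise. Hence $\pi_W(r1_K)(u) = r \cdot \pi_W(1_K)(u)$ equals $r u$ for $u \in K \cap W$ and $0$ otherwise. Since $W$ is a basis for the free module $\mathbb{F}_R(W)$, the vanishing of $\pi_W(r1_K)$ forces $r u = 0$ in $\mathbb{F}_R(W)$ for every $u \in K \cap W$; because the $u$ are basis elements over the ring $R$, this means $r = 0$ whenever $K \cap W \neq \emptyset$. In the case $K \cap W = \emptyset$, the support condition already gives $f_K \equiv 0$ on $W$, so $\pi_W(1_K) = 0$ outright. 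Either way $\pi_W(1_K) = 0$, i.e.\ $1_K \in \ker(\pi_W)$, which is exactly what basicity requires.

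I expect the only mild subtlety — and the "main obstacle," such as it is — to be bookkeeping around the free-module structure: one must be careful that $r u = 0$ with $u$ a basis vector genuinely implies $r = 0$ (this is immediate for a free module, but it is the crux of why the statement is true), and that the action of $\pi_W(1_K)$ is correctly identified on \emph{all} of $W$, not just on $s(K) \cap W$, using the support clause of \eqref{it1:rep}. No analytic input is needed; the argument is purely algebraic and short.
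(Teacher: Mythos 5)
Your argument is correct and is essentially the paper's proof run in the direct rather than the contrapositive direction: both rest on the same computation that $\pi_W(1_K)(u)=u$ for $u\in K\cap W$ and $\pi_W(1_K)(u)=0$ otherwise, together with freeness of $\mathbb{F}_R(W)$ so that $ru\neq 0$ whenever $r\neq 0$ and $u$ is a basis element. One small point: the basic-ideal condition is to be read with $r\in R\setminus\{0\}$ (as the paper's own proof does), so in your case $K\cap W\neq\emptyset$ the correct conclusion is a contradiction with $r\neq 0$ (that case simply cannot occur), not ``either way $\pi_W(1_K)=0$''.
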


\begin{proof}
Let  $K\subseteq G^{(0)}$ be a compact open set. 
Suppose that $1_K \notin \ker(\pi_W)$ and fix $r \in R\setminus \{0\}$.  It suffices to show $r1_K \notin \ker(\pi_W)$. 
Notice from Proposition~\ref{thm:homo} that
\begin{equation*}
\pi_W(1_K)(u) = 
     \begin{cases}
       u, & \text{if } u \in K \cap W; \\
       0, &  \text{otherwise}
     \end{cases}   
\end{equation*}
for each $u \in \go$.  
Since $1_K \notin \ker(\pi_W)$, there exists $u \in K \cap W$ such that \[\pi_W(1_K)(u) = u \neq 0\]
and hence 
\[\pi_W(r1_K)(u) = ru \neq 0.\]
Therefore $r1_K \notin \ker(\pi_W)$ and $\ker(\pi_W)$ is a basic ideal.
\end{proof}

\begin{proof}[proof of Theorem~\ref{thm:CKUT8}]

Suppose $G$ is not effective. Then there exists a nonempty compact
 open bisection $B \subseteq G \backslash G^{(0)}$ such that for every $\gamma \in B$, $r(\gamma) = s(\gamma)$. 
Hence $B \neq s(B)$ but $f_B = f_{s(B)}$, with $f_B$ as in Lemma~\ref{thm:homo}. Now let $\pi_{G^{(0)}}$ be the
 nonzero homomorphism of Lemma~\ref{thm:homo}.  Then 
\[\pi_{G^{(0)}}(1_B) = \pi_{G^{(0)}}(1_{s(B)})\] and hence $1_B - 1_{s(B)} \in \ker(\pi_{G^{(0)}})$. But as 
$B \neq s(B)$ we have $1_B - 1_{s(B)} \neq 0$, thus $\ker(\pi_{G^{(0)}})$ is a non-trivial basic ideal of 
$A_R(G)$ from the above and Lemma~\ref{thm:CKUT6}. Therefore $A_R(G)$ is not basically simple.

Next suppose $G$ is not minimal. Let $U$ be a nontrivial open invariant subset of $G^{(0)}$. The complement
 $W := G^{(0)} \backslash U$ is also an invariant subset of $G^{(0)}$. Let $\pi_W$ be the nonzero homomorphism of 
Lemma~\ref{thm:homo}. We have that $\ker(\pi_W)$ is a basic ideal from 
Lemma~\ref{thm:CKUT6} and since $\pi_W$ is nonzero, $\ker(\pi_W)$ is not all of $A_R(G)$. 
Let $V \subseteq U$ be a compact open nonempty set, then 
$1_V \in \ker(\pi_W)$ as $W \cap V = \emptyset$. Hence $\ker(\pi_W)$ is a nontrivial basic ideal of $A_R(G)$,
 so $A_R(G)$ is not basically simple.

Now suppose $G$ is minimal and effective and let $I \neq \{0\}$ be a basic ideal of $A_R(G)$.  Apply
Corollary~\ref{thm:CKUT4} to get a compact open subset $U \subseteq G^{(0)}$ such that $1_U \in I$.
Let $g\in A_R(G)$, and define $K := r(\supp(g)) \subseteq G^{(0)}$. As $g$ is locally constant we have that 
$K$ is compact and open. Since $s(r^{-1}(U))$ is a nonempty open invariant set, it is all of $G^{(0)}$ as $G$ is 
minimal. Therefore $K \subseteq s(r^{-1}(U))$. So for each $u \in K$, there exists $\gamma _u$ with $r(\gamma _u) \in U$ 
and $s(\gamma _u) = u$. For each $u$, let $B_u$ be a compact open bisection containing $\gamma _u$ such that
$r(B_u) \subseteq U$ and $s(B_u) \subseteq K$. Then \[1_{s(B_u)} = 1_{B_u^{-1}} * 1_U * 1_{B_u}\] belongs to $I$. 
Since $K$ is compact, there is a finite subset $\{v_1, \ldots, v_n\}$ of $K$ such that $\{s(B_{v_i}) : 1 \leq i \leq n\}$
 covers $K$. Using \cite[Remark 2.4]{CFST} we can assume that the $s(B_{v_i})$ are mutually disjoint. For each $i$, the function $k_
i := 1_{s(B_{v_i})} \in I$, so $1_K = \sum\limits_{i = 1}^{n} k_i \in I$. Hence $g = 1_K * g \in I$, so 
$I = A_R(G)$, which implies $A_R(G)$ is basically simple.
\end{proof}

\begin{cor}\label{thm:feild}
Let $G$ be a Hausdorff, ample groupoid and $R$ a commutative ring with identity. 
Then $A_R(G)$ is simple if and only if $G$ is effective and minimal, and $R$ is a field.
\end{cor}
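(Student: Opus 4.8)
The plan is to reduce the statement to Theorem~\ref{thm:CKUT8} by observing that, when $R$ is a field, the notions of ``ideal'' and ``basic ideal'' of $A_R(G)$ coincide.

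For the backward implication I would assume $G$ is effective and minimal and $R$ is a field. Theorem~\ref{thm:CKUT8} tells us $A_R(G)$ is basically simple, so it suffices to check that every ideal $I$ of $A_R(G)$ is basic. If $r1_K \in I$ with $r \in R \setminus \{0\}$ and $K \subseteq \go$ compact open, then $r$ is invertible in $R$, so $1_K = r^{-1}(r1_K) \in I$ because $I$ is closed under the $R$-action; hence $I$ is basic, and as $A_R(G)$ has no nontrivial basic ideals it has no nontrivial ideals at all, i.e. it is simple.

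For the forward implication, first note that every basic ideal is in particular an ideal, so simplicity of $A_R(G)$ forces basic simplicity, and Theorem~\ref{thm:CKUT8} then yields that $G$ is effective and minimal. To see that $R$ must be a field I would argue by contraposition: given an ideal $J$ of $R$ with $\{0\} \subsetneq J \subsetneq R$, form $I_J := \{f \in A_R(G) : f(\gamma) \in J \text{ for all } \gamma \in G\}$. Since $J$ is an ideal of $R$, reading off the formulas for addition, the $R$-action, and the convolution $(f*g)(\alpha) = \sum_{r(\beta)=r(\alpha)} f(\beta)g(\beta^{-1}\alpha)$ shows that $I_J$ is closed under all of these (from both sides), so it is a two-sided ideal. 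Picking $0 \neq j \in J$ and a nonempty compact open bisection $B$ gives $j1_B \in I_J \setminus \{0\}$, while $1_B \notin I_J$ since $1 \notin J$; thus $I_J$ is a nontrivial ideal and $A_R(G)$ is not simple.

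Everything here is short once Theorem~\ref{thm:CKUT8} is available; the only mild points requiring care are confirming that $I_J$ really is a two-sided ideal (immediate from $J$ being an ideal of $R$) and recording that the degenerate case $G = \emptyset$, where $A_R(G) = \{0\}$, is tacitly excluded (equivalently, we only consider $A_R(G) \neq \{0\}$, so that a nonempty compact open bisection exists). I do not expect any genuine obstacle beyond these bookkeeping remarks.
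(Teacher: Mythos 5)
Your proposal is correct and takes essentially the same approach as the paper: both directions reduce to Theorem~\ref{thm:CKUT8}, and your backward implication (every ideal is basic when $R$ is a field) is identical to the paper's. The only difference is in the forward direction, where the paper takes the ideal $JA_R(G)$ for a nontrivial ideal $J$ of $R$ and simply asserts it is nontrivial, while your $I_J=\{f\in A_R(G): f(\gamma)\in J \text{ for all }\gamma\}$ (which contains $JA_R(G)$) makes the nonzero-ness and properness explicit --- a minor variation, if anything slightly more careful than the paper's wording.
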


\begin{proof}
Suppose $G$ is effective and minimal and $R$ is a field.  
Apply Theorem~\ref{thm:CKUT8} to see that $A_R(G)$ is basically simple. 
Let $I$ be a nonzero ideal of $A_R(G)$.  We claim that $I$ is a basic ideal. 
To prove the claim, suppose that $r1_K \in I$ for some 
$r \in R$ and $K\subseteq G^{(0)}$ a compact open set.  As $R$ is a 
field, there exists $r^{-1} \in R$ such that $1_K = r^{-1}(r1_K) \in I$. Thus every ideal in 
$A_R(G)$ is a basic ideal, so $A_R(G)$ is simple.

Now suppose that $A_R(G)$ is simple, then it is also basically simple so we apply 
Theorem~\ref{thm:CKUT8} to get that $G$ is effective and minimal. By way of contradiction, 
suppose $I$ is a nontrivial ideal of $R$. Then $IA_R(G)$ is a nontrivial ideal of $A_R(G)$ which
is a contradiction.  Thus $R$ is a field.
\end{proof}
\section{The center of $A_R(G)$}\label{sec:cent}

Following \cite[Definition~4.12]{Steinberg2010}, we say $f \in A_R(G)$ is a class function if:
\begin{enumerate}
\item \label{it1:classfunction} $f(\alpha) \neq 0$ implies $r(\alpha) = s(\alpha)$; and
\item \label{it2:classfunction} $s(\alpha) = r(\alpha) = s(\beta)$ implies $f(\beta\alpha\beta^{-1}) = f(\alpha)$.
\end{enumerate}
Steinberg shows that the center $Z(A_R(G))$ of $A_R(G)$ is the set of \emph{class functions} 
(see \cite[Proposition~4.13]{Steinberg2010}).

\begin{lemma}\label{thm:inv}
Let $G$ be a Hausdorff, ample groupoid, and $R$ a commutative ring with identity. Let $f$ be a class function in $A_R(G)$ and fix $r \in R$. 
The set $f^{-1}(r)\cap G^{(0)}$ is a 
compact, open, invariant subset of $G^{(0)}$.
\end{lemma}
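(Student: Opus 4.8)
The plan is to verify the three properties in turn, using the class-function axioms together with the fact that $f$ is locally constant with compact support. Write $U := f^{-1}(r) \cap G^{(0)}$.

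\emph{Openness and compactness.} Since $f$ is locally constant, $f^{-1}(r)$ is open in $G$ (as noted in the Remark following the definition of $A_R(G)$), so $U$ is open in $G^{(0)}$. For compactness, I would split into two cases. If $r = 0$, then $U = G^{(0)} \setminus \supp(f)$; since $\supp(f)$ is clopen, $G^{(0)} \setminus \supp(f)$ is clopen in $G^{(0)}$, but this need not be compact. Here I expect to need the standing assumption that we only consider ample groupoids together with the observation that $U$ is actually closed in $G^{(0)}$; more carefully, $f^{-1}(0) \cap G^{(0)}$ is the complement in $G^{(0)}$ of the open set $f^{-1}(R \setminus \{0\}) \cap G^{(0)}$, hence closed, and its complement $G^{(0)} \cap \supp(f)$ is compact open, so $U$ is clopen in $G^{(0)}$ — but I should double-check whether the lemma intends $r \ne 0$ or whether "compact" is to be read relative to some ambient clopen set; most likely the relevant case for the center application is $r \ne 0$. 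If $r \neq 0$, then $f^{-1}(r)$ is a clopen subset of the clopen set $\supp(f)$, which is compact, so $f^{-1}(r)$ is compact, and $U = f^{-1}(r) \cap G^{(0)}$ is the intersection of a compact set with the closed set $G^{(0)}$, hence compact; it is also open in $G^{(0)}$ as shown.

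\emph{Invariance.} This is where axiom (\ref{it2:classfunction}) does the work. I must show $r(s^{-1}(U)) = U$. The inclusion $U \subseteq r(s^{-1}(U))$ is immediate since every $u \in U \subseteq G^{(0)}$ equals $r(u)$ with $u \in s^{-1}(u)$. For the reverse inclusion, take $u \in U$ and $\beta \in G$ with $s(\beta) = u$; I must show $r(\beta) \in U$, i.e. $f(r(\beta)) = r$. Apply axiom (\ref{it2:classfunction}) with $\alpha := u = s(\beta)$: since $s(\alpha) = r(\alpha) = u = s(\beta)$, we get $f(\beta \alpha \beta^{-1}) = f(\alpha)$, that is $f(\beta u \beta^{-1}) = f(u) = r$. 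But $\beta u \beta^{-1} = \beta \beta^{-1} = r(\beta)$, using $u = s(\beta) = \beta^{-1}\beta$ so that $\beta u = \beta$. Hence $f(r(\beta)) = r$, and since $r(\beta) \in G^{(0)}$ we conclude $r(\beta) \in U$. This gives $r(s^{-1}(U)) \subseteq U$, so $U$ is invariant.

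\emph{Main obstacle.} The only genuinely delicate point is the compactness claim when $r = 0$: in that case $U$ is the complement of a compact open set inside $G^{(0)}$, which is compact precisely when $G^{(0)}$ is compact, i.e. when $A_R(G)$ is unital. I would flag this and either restrict to $r \neq 0$ (which is all that is needed in the application to the center in Theorem~\ref{thm:center}) or note that $U$ is in any case clopen in $G^{(0)}$. The invariance argument is short and is the conceptual heart of the lemma; the topological bookkeeping is routine once one tracks which sets are clopen using that $\supp(f)$ is clopen and $G^{(0)}$ is clopen in $G$.
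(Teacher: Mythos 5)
Your proof is correct, and your invariance argument is exactly the one the paper gives: apply class-function axiom (\ref{it2:classfunction}) with $\alpha = s(\beta) \in U$ to conclude $f(r(\beta)) = f(\beta\,s(\beta)\,\beta^{-1}) = f(s(\beta)) = r$. Where you differ is the compactness/openness step: the paper expands $f = \sum_{B\in F} a_B 1_B$ over mutually disjoint compact open bisections via Lemma~\ref{lem:3.5} and asserts $f^{-1}(r) = \bigcup_{\{D \in F : a_D = r\}} D$, whereas you argue directly that $f^{-1}(r)$ is clopen (as $f$ is locally constant) and, for $r \neq 0$, sits inside the compact clopen set $\supp(f)$, hence is compact, and then intersect with the clopen unit space. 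The two routes cost about the same; yours avoids the decomposition lemma, while the paper's makes $f^{-1}(r)$ visibly a finite union of basic sets. More importantly, your caveat about $r = 0$ is well taken: the paper's displayed identity for $f^{-1}(r)$ is only valid for $r \neq 0$ (for $r = 0$ the preimage also contains $G \setminus \supp(f)$), and indeed $f^{-1}(0)\cap G^{(0)}$ need not be compact when $G^{(0)}$ is non-compact, so the lemma should really be read with $r$ a nonzero value of $f$ (or with $G^{(0)}$ compact); as you note, this is harmless for the application in Theorem~\ref{thm:center}, where only openness, invariance and nonemptiness of $f^{-1}(r)\cap G^{(0)}$ for a value $r$ actually attained on $G^{(0)}$ are used, and in part (\ref{it1:center}) the unit space is compact anyway.
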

\begin{proof}
Without loss of generality, assume that there exists $u \in G^{(0)}$ such that $f(u) =r$; otherwise, $f^{-1}(r)\cap G^{(0)}$ 
would be empty. Write 
$f = \sum\limits_{B \in F} a_B1_B$ where $F$ is a finite collection of mutually disjoint compact open bisections
using Lemma~\ref{lem:3.5}. 
Then we have \[
              f^{-1}(r) = \bigcup\limits_{\{D \in F:a_D = r\}}D.
             \]
Thus $f^{-1}(r)$ is compact open and since $\go$ is clopen we have that $f^{-1}(r) \cap \go$ is compact open.

To see the invariance we show that 
\[
 r(s^{-1}(f^{-1}(r) \cap \go )) \subseteq f^{-1}(r) \cap \go.
\]
Let $u$ be an element of the right-hand side.  Thus, we can find $\gamma \in G$ and such that 
$s(\gamma) \in f^{-1}(r)\cap G^{(0)}$ and $r(\gamma)=u$. 
Thus $f(s(\gamma)) = r$.   Since $f$ is a class function and $s(s(\gamma)) = r(s(\gamma)) = s(\gamma)$ it follows that
 \[f(\gamma\gamma^{-1}) = f(\gamma s(\gamma)\gamma^{-1}) = f(s(\gamma)) = r.\] 
Hence $u=r(\gamma) = \gamma\gamma^{-1} \in f^{-1}(r)\cap G^{(0)}$.
\end{proof}

The following theorem generalises the characterisation of the center of a Kumjian-Pask algebra given in \cite[Theorem~4.7]{center}. 

\begin{theorem}\label{thm:center}
Let $G$ be  a Hausdorff, ample groupoid, and $R$ a commutative ring with identity. 
\begin{enumerate}
\item\label{it1:center} Suppose $G$ is effective and minimal, and $G^{(0)}$ is compact. Then \[Z(A_R(G)) = R1_{G^{(0)}}.\]
\item\label{it2:center} Suppose $G$ is minimal and $G^{(0)}$ is not compact. Then \[Z(A_R(G)) = \{0\}.\]
\end{enumerate}
\end{theorem}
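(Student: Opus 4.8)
The plan is to use the characterisation of $Z(A_R(G))$ as the set of class functions together with Lemma~\ref{thm:inv}, which says that each level set $f^{-1}(r) \cap G^{(0)}$ of a class function is a compact open invariant subset of $G^{(0)}$. Minimality forces each such set to be either empty or all of $G^{(0)}$, and this will pin down the class functions completely in both cases.

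For part~(\ref{it1:center}), let $f \in Z(A_R(G))$ be a class function. For each $r \in R$ in the (finite) range of $f$, Lemma~\ref{thm:inv} gives that $f^{-1}(r) \cap G^{(0)}$ is compact open and invariant, hence by minimality it is $\emptyset$ or $G^{(0)}$. Since $G^{(0)}$ is partitioned by these sets as $r$ ranges over $f(G^{(0)})$, exactly one value $r_0$ occurs and $f \equiv r_0$ on $G^{(0)}$. It remains to show $f$ vanishes off $G^{(0)}$. Here I would invoke effectiveness: write $f = \sum_{B \in F} a_B 1_B$ with $F$ a finite family of mutually disjoint compact open bisections (Lemma~\ref{lem:3.5}), and suppose for contradiction that some $B \in F$ with $a_B \neq 0$ is not contained in $G^{(0)}$. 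Using condition~(\ref{it1:classfunction}) in the definition of class function, $f(\alpha) \neq 0$ implies $\alpha \in \operatorname{Iso}(G)$, so $B \setminus G^{(0)}$ is a nonempty open subset of $\operatorname{Iso}(G) \setminus G^{(0)}$ (after shrinking $B$ to the open set where $f$ is the constant $a_B$ and removing the relatively clopen piece lying in $G^{(0)}$), contradicting effectiveness. Hence $\supp(f) \subseteq G^{(0)}$, so $f = r_0 1_{G^{(0)}}$, and conversely $r_0 1_{G^{(0)}} \in Z(A_R(G))$ since $G^{(0)}$ is compact (so $1_{G^{(0)}} \in A_R(G)$ is the identity) — giving $Z(A_R(G)) = R1_{G^{(0)}}$.

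For part~(\ref{it2:center}), let $f \in Z(A_R(G))$ be a class function and again consider $f^{-1}(r) \cap G^{(0)}$ for $r \neq 0$: by Lemma~\ref{thm:inv} and minimality this is $\emptyset$ or $G^{(0)}$, but $f$ has compact support while $G^{(0)}$ is not compact, so it cannot equal $G^{(0)}$; hence $f^{-1}(r) \cap G^{(0)} = \emptyset$ for every $r \neq 0$, i.e. $f \equiv 0$ on $G^{(0)}$. Then the same effectiveness-free argument will not quite work, so instead I would argue directly: for any $\alpha$ with $f(\alpha) \neq 0$ we have $r(\alpha) = s(\alpha) =: u$, and class-function condition~(\ref{it2:classfunction}) with $\beta = \alpha$ gives nothing immediately, so I would reduce to the unit space using the structure of $Z(A_R(G))$ — note that a minimal groupoid with noncompact unit space has no isolated points in a suitable sense, and a nonzero class function supported on isotropy but vanishing on $G^{(0)}$ would, via Lemma~\ref{lem:3.5}, have some compact open bisection $B \subseteq \operatorname{Iso}(G) \setminus G^{(0)}$ in its support; then invariance-type arguments (pushing $B$ around the dense orbit using condition~(\ref{it2:classfunction})) together with compact support force a contradiction. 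The cleanest route is probably to observe that minimality already implies $G$ is effective or to handle it by the level-set argument applied to the function $f * f^*$ or to $|f|$-type data; I would check whether part~(\ref{it2:center}) can simply be deduced from part~(\ref{it1:center}) applied to unitisations or restrictions.

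\textbf{Main obstacle.} The delicate point is passing from ``$f$ is constant on $G^{(0)}$'' to ``$f$ is supported on $G^{(0)}$'' in part~(\ref{it1:center}), which is exactly where effectiveness must enter; the subtlety is that the constancy value might be a zero divisor or the support off $G^{(0)}$ might carry a different constant, so one has to argue carefully that any compact open bisection in $\supp(f)$ meeting $G \setminus G^{(0)}$ produces an open subset of $\operatorname{Iso}(G) \setminus G^{(0)}$. For part~(\ref{it2:center}), the obstacle is that effectiveness is \emph{not} assumed, so one cannot rule out isotropy directly; the resolution must come from combining the noncompactness of $G^{(0)}$, minimality (density of orbits), and the compact support of $f$ to show no nonzero class function can exist at all, and making that density-plus-compactness argument rigorous is the part I expect to require the most care.
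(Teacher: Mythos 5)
Your part~(\ref{it1:center}) is essentially the paper's argument, just in the opposite order: the paper first uses effectiveness plus class-function condition~(\ref{it1:classfunction}) to force $\supp(f)\subseteq G^{(0)}$ (any compact open bisection inside the open set $\supp(f)\setminus G^{(0)}$ would contain $\gamma$ with $r(\gamma)\neq s(\gamma)$), and then applies Lemma~\ref{thm:inv} and minimality to a single nonempty level set to get $f=r1_{G^{(0)}}$. Your version (constancy on $G^{(0)}$ first, then effectiveness to kill the rest) is fine; the only quibble is that for the value $r=0$ you should not quote compactness from Lemma~\ref{thm:inv} (it can fail), but openness and invariance of $f^{-1}(0)\cap G^{(0)}$ are all you use, so this is cosmetic.

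Part~(\ref{it2:center}) has a genuine gap: you correctly deduce that $f$ vanishes on $G^{(0)}$, but you never actually rule out a nonzero class function supported on $\operatorname{Iso}(G)\setminus G^{(0)}$, and your proposed escape routes do not work. Minimality does \emph{not} imply effectiveness (a nontrivial discrete group, viewed as a groupoid with a single unit, is minimal but not effective), and $f*f^*$ or ``$|f|$-type data'' are unavailable over a general commutative ring. The missing idea is a conjugation argument that needs neither effectiveness nor the vanishing of $f$ on $G^{(0)}$: fix any $\gamma\in\supp(f)$; condition~(\ref{it1:classfunction}) gives $s(\gamma)=r(\gamma)=:u$. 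By minimality $[u]$ is dense in the noncompact space $G^{(0)}$, so $[u]$ is not relatively compact. For each $u_i\in[u]$ choose $\gamma_i$ with $s(\gamma_i)=u$ and $r(\gamma_i)=u_i$; condition~(\ref{it2:classfunction}) gives $f(\gamma_i\gamma\gamma_i^{-1})=f(\gamma)\neq 0$, so every conjugate $\gamma_i\gamma\gamma_i^{-1}$ lies in the compact set $\supp(f)$. But $r(\gamma_i\gamma\gamma_i^{-1})=u_i$, so $[u]\subseteq r(\supp(f))$, which is compact, hence closed, hence contains $\overline{[u]}=G^{(0)}$, contradicting noncompactness of $G^{(0)}$. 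This is exactly the ``pushing around the dense orbit'' step you gestured at; without it written out, part~(\ref{it2:center}) is not proved.
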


\begin{proof}

(\ref{it1:center}) Since $A_R(G)$ is unital (see \cite[Proposition 4.11]{Steinberg2010}),   $Z(A_R(G)) \neq \{0\}$. 
Let $f \in Z(A_R(G)) \setminus \{0\}$, 
then by \cite[Proposition 4.13]{Steinberg2010} $f$ is a nonzero class function. We claim that
$\supp(f) \setminus G^{(0)} = \emptyset.$  To see this, assume by way of contradiction
 that $\supp(f) \setminus G^{(0)}$ is nonempty. As $\supp(f)$ is open and $G^{(0)}$ is closed we have that 
$\supp(f) \setminus G^{(0)}$ is open and therefore contains a compact open bisection $B$ as $G$ is ample. 
Since $G$ is effective, there exists $\gamma \in B $ 
such that $r(\gamma) \neq s(\gamma)$.  But $f(\gamma) \neq 0$ contradicts item (\ref{it1:classfunction}) of the definition
 of a class function.  Thus $\supp(f) \subseteq G^{(0)}$.  

As $f$ is nonzero there exists $r \in R$ such that $f^{-1}(r)\cap G^{(0)} \neq \emptyset$.  Applying Lemma~\ref{thm:inv}
we see that $f^{-1}(r)\cap G^{(0)}$ is a nonempty open invariant subset of $G^{(0)}$ and hence must 
equal to $G^{(0)}$ as $G$ is minimal. Therefore $f = r1_{G^{(0)}}$.

(\ref{it2:center}) By way of contradiction, suppose there exists
$f \in Z(A_R(G)) \setminus \{0\}$.  Then by \cite[Proposition 4.13]{Steinberg2010} $f$ is a nonzero class function.
Fix $\gamma \in \supp f$ and let $u:=s(\gamma)$.  Since $G$ is minimal, $[u]$ is dense in $\go$.  
Since $\go$ is not compact, $[u]$ is not relatively compact (that is, does not have compact closure).  
For each $u_i \in [u]$, there exists
$\gamma_i \in G$ such that $s(\gamma_i)=u$ and $r(\gamma_i) = u_i$.  Note that because $f$ is a class 
function, condition (\ref{it2:classfunction}) of the definition tells us that $\gamma_i\gamma \gamma_i^{-1} \in \supp f$.
We claim that  $A:=\{\gamma_i\gamma \gamma_i^{-1}\}$ is not relatively compact.
To prove the claim, suppose otherwise.  That is suppose $\overline{A}$ is compact.
Note that $[u] \subseteq r(\overline{A})$.  But since $r$ is continuous, $r(\overline{A})$ is compact and
hence $[u]$ is relatively compact which is a contradiction, proving the claim.
However, we now have a contradiction because $\supp f$ is compact.
Therefore $Z(A_R(G)) = \{0\}$.
\end{proof}

\end{document}